\newtheorem{thm}{Theorem}[section]
\newtheorem{prop}{Proposition}[section]
\newtheorem{defi}{Definition}[section]
\newtheorem{rem}{Remark}[section]
\newtheorem*{ques}{\it{Question}}
\newtheorem*{claim1}{\it{Claim 1}}
\newtheorem*{claim2}{\it{Claim 2}}
\begin{document}

\title{S-limit shadowing implies the denseness of chain components with the shadowing property}
\author{Noriaki Kawaguchi}
\subjclass[2020]{37B65}
\keywords{s-limit shadowing; shadowing; chain components; dense; generic}
\address{Research Institute of Science and Technology, Tokai University, 4-4-1 Kitakaname, Hiratsuka, Kanagawa 259-1292, Japan}
\email{gknoriaki@gmail.com}

\begin{abstract}
For any continuous self-map of a compact metric space, we consider the space of chain components and prove that the s-limit shadowing implies the denseness of chain components with the shadowing property. It gives a partial answer to a question raised by Moothathu [Topology Appl.\:158 (2011) 2232--2239]. We also prove that the s-limit shadowing is not generic in the space of homeomorphisms of a closed differentiable manifold.
\end{abstract}

\maketitle

\markboth{NORIAKI KAWAGUCHI}{S-limit shadowing implies the denseness of chain components with the shadowing property}

\section{Introduction}
{\em Shadowing} as introduced by the works of Anosov and Bowen \cite{A, B2} is a hyperbolic feature of dynamical systems. It generally refers to a property that coarse orbits, or {\em pseudo-orbits}, can be approximated by true orbits and has played an important role in the study of dynamical systems (see \cite{AH, P} for general background). {\em Chain components} as they appear in the so-called fundamental theorem of dynamical systems by Conley \cite{C} are basic objects in the theory of dynamical systems. They are also called as {\em chain transitive components, basic sets, chain recurrent classes, chain classes}, and so on, and there are many studies in the literature focusing on them. The main result of this paper involves with a variation of shadowing, namely {\em s-limit shadowing}, and chain components. It gives a partial answer to a question raised by Moothathu \cite{M}. As a by-product of the proof technique, we also show that the s-limit shadowing is not generic in the space of all homeomorphisms of a closed differentiable manifold.

We begin with several definitions. Throughout, $X$ denotes a compact metric space endowed with a metric $d$.

\begin{defi}
\normalfont
Given a continuous map $f\colon X\to X$ and $\delta>0$, a finite sequence $(x_i)_{i=0}^{k}$ of points in $X$, where $k>0$ is a positive integer, is called a {\em $\delta$-chain} of $f$ if $d(f(x_i),x_{i+1})\le\delta$ for every $0\le i\le k-1$. Let $\xi=(x_i)_{i\ge0}$ be a sequence of points in $X$. For $\delta>0$, $\xi$ is called a {\em $\delta$-pseudo orbit} of $f$ if $d(f(x_i),x_{i+1})\le\delta$ for all $i\ge0$. For $\epsilon>0$, $\xi$ is said to be {\em $\epsilon$-shadowed} by $x\in X$ if $d(f^i(x),x_i)\leq \epsilon$ for all $i\ge 0$. We say that $f$ has the {\em shadowing property} if for any $\epsilon>0$, there is $\delta>0$ such that every $\delta$-pseudo orbit of $f$ is $\epsilon$-shadowed by some point of $X$.
\end{defi}

Let $f\colon X\to X$ be a continuous map. For any $x,y\in X$ and $\delta>0$, the notation $x\rightarrow_\delta y$ means that there is a $\delta$-chain $(x_i)_{i=0}^k$ of $f$ with $x_0=x$ and $x_k=y$. We write $x\rightarrow y$ if $x\rightarrow_\delta y$ for all $\delta>0$. We say that $x\in X$ is a {\em chain recurrent point} for $f$ if $x\rightarrow x$. Let $CR(f)$ denote the set of chain recurrent points for $f$. We define a relation $\leftrightarrow$ in
\[
CR(f)^2=CR(f)\times CR(f)
\]
by: for any $x,y\in CR(f)$, $x\leftrightarrow y$ if and only if $x\rightarrow y$ and $y\rightarrow x$. Note that $\leftrightarrow$ is a closed $(f\times f)$-invariant equivalence relation in $CR(f)^2$ and satisfies $x\leftrightarrow f(x)$ for all $x\in X$. An equivalence class $C$ of $\leftrightarrow$ is called a {\em chain component} for $f$. We regard the quotient space
\[
\mathcal{C}(f)=CR(f)/{\leftrightarrow}
\]
as a space of chain components.

\begin{rem}
\normalfont
A subset $S$ of $X$ is said to be $f$-invariant if $f(S)\subset S$. For a closed $f$-invariant subset $S$ of $X$, we say that $f|_S\colon S\to S$ is {\em chain transitive} if for any $x,y\in S$ and $\delta>0$, there is a $\delta$-chain $(x_i)_{i=0}^k$ of $f|_S$ such that $x_0=x$ and $x_k=y$. The following properties hold:
\begin{itemize}
\item[(1)] $CR(f)=\bigsqcup_{C\in\mathcal{C}(f)}C$,
\item[(2)] Every $C\in\mathcal{C}(f)$ is a closed $f$-invariant subset of $CR(f)$,
\item[(3)] $f|_C\colon C\to C$ is chain transitive for all $C\in\mathcal{C}(f)$.
\end{itemize}
\end{rem}

Given any continuous map $f\colon X\to X$, let
\[
\mathcal{C}_{\rm sp}(f)=\{C\in\mathcal{C}(f)\colon\text{$f|_C\colon C\to C$ has the shadowing property}\}.
\]
In \cite{M}, Moothathu raised the following question (see Section 7 of \cite{M}):
\begin{ques}
For any continuous map $f\colon X\to X$, if $f$ has the shadowing property, then
\[
\mathcal{C}(f)=\overline{\mathcal{C}_{\rm sp}(f)}?
\]
\end{ques}

In order to state the main result of this paper, we recall the definition of s-limit shadowing. As far as the author knows, the name ``s-limit shadowing'' was introduced by Sakai \cite{S}, but it was implicitly considered by Bowen \cite{B1}.  

\begin{defi}
\normalfont
Let $f\colon X\to X$ be a continuous map and let $\xi=(x_i)_{i\ge0}$ be a sequence of points in $X$. For $\delta>0$, $\xi$ is called a {\em $\delta$-limit-pseudo orbit} of $f$ if $d(f(x_i),x_{i+1})\le\delta$ for all $i\ge0$, and
\[
\lim_{i\to\infty}d(f(x_i),x_{i+1})=0.
\]
For $\epsilon>0$, $\xi$ is said to be {\em $\epsilon$-limit shadowed} by $x\in X$ if $d(f^i(x),x_i)\leq \epsilon$ for all $i\ge 0$, and
\[
\lim_{i\to\infty}d(f^i(x),x_i)=0.
\]
We say that $f$ has the {\em s-limit shadowing property} if for any $\epsilon>0$, there is $\delta>0$ such that every $\delta$-limit-pseudo orbit of $f$ is $\epsilon$-limit shadowed by some point of $X$.
\end{defi}

The main result is the following.

\begin{thm}
Given any continuous map $f\colon X\to X$, if $f$ has the s-limit shadowing property, then
\[
\mathcal{C}(f)=\overline{\mathcal{C}_{\rm sp}(f)}.
\]
\end{thm}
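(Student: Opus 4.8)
\emph{A proof proposal.} I would first translate the statement into a local approximation question about chain components, and then settle it by a pseudo-orbit construction exploiting the limit part of s-limit shadowing.

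Since $\leftrightarrow$ is a closed equivalence relation on the compact metric space $CR(f)$, the quotient map $\pi\colon CR(f)\to\mathcal{C}(f)$ is closed, $\mathcal{C}(f)$ is compact metrizable, and $x\mapsto C(x)$ (the chain component of $x$) is upper semicontinuous as a set-valued map into the hyperspace of compact subsets of $X$. Consequently, for a fixed chain component $C$ and $\rho>0$, the sets $\{\,[D]\in\mathcal{C}(f): D\subseteq B_\rho(C)\,\}$, where $B_\rho(C)=\{y\in X: d(y,C)<\rho\}$, form a neighborhood basis of $[C]$ in $\mathcal{C}(f)$; hence $\mathcal{C}(f)=\overline{\mathcal{C}_{\rm sp}(f)}$ is equivalent to: for every chain component $C$ and every $\rho>0$ there is a chain component $D$ with $D\subseteq B_\rho(C)$ and $f|_D$ having the shadowing property. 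I would also use the elementary fact that the s-limit shadowing property implies the shadowing property (given $\epsilon$, take $\delta$ from s-limit shadowing; for a $\delta$-pseudo orbit $(x_i)_{i\ge0}$, each $(x_0,\dots,x_n,f(x_n),f^2(x_n),\dots)$ is a $\delta$-limit-pseudo orbit, and a limit over $n$ of its $\epsilon$-limit shadowing points $\epsilon$-shadows $(x_i)_{i\ge0}$), so in particular $f$ has the shadowing property.

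The second ingredient is a transfer lemma: \emph{if $f$ has the shadowing property and a chain component $D$ is a locally maximal (isolated) invariant set, then $f|_D$ has the shadowing property.} Indeed, a $\delta$-pseudo orbit of $f|_D$ extends, by chain transitivity of $f|_D$ (and $f(D)=D$), to a bi-infinite $\delta$-pseudo orbit in $D$; compactness of $X$ and the shadowing property of $f$ produce a genuine full orbit $\epsilon$-shadowing it, and this full orbit, being an invariant set inside $B_\epsilon(D)$, lies in $D$ once $\epsilon$ is below an isolating radius, so its base point shadows the given pseudo orbit from within $D$. It therefore suffices to prove that, under s-limit shadowing, the locally maximal chain components are dense in $\mathcal{C}(f)$ — this is presumably where the strength of s-limit shadowing over mere shadowing is needed, and the reason Moothathu's question for arbitrary shadowing remains open.

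For this — the heart of the matter — fix a chain component $C$ and $\rho>0$, put $\epsilon=\rho/4$, and take $\delta>0$ from s-limit shadowing at scale $\epsilon$. The idea is to build, for large $n$, a $\delta$-limit-pseudo orbit supported in $B_\rho(C)$ which after an initial segment becomes a $1/n$-dense $1/n$-pseudo orbit with errors tending to $0$ inside the $1/n$-chain class of a fixed $z\in C$; s-limit shadowing yields a point $x_n$ $\epsilon$-limit shadowing it, whose $\omega$-limit set $\omega(x_n,f)$ is a chain transitive subset of $B_\rho(C)$ that exhausts, up to scale $1/n$, the $1/n$-chain class of $z$. Taking a limit of the $x_n$ (equivalently of the compact sets $\omega(x_n,f)$) extracts a chain component $D\subseteq B_\rho(C)$. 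The crucial claim is that $D$ is locally maximal: any chain-recurrent point of $f$ lying sufficiently close to $D$ must lie in $D$. The mechanism — here the \emph{limit} part of s-limit shadowing is indispensable — is that such a point, being chain recurrent at every small scale, generates (by concatenating loops of ever-finer mesh) a $\delta$-limit-pseudo orbit whose $\epsilon$-limit shadowing point is forced, by the density and convergence built into the construction, to have $\omega$-limit set $D$ and indeed to be absorbed into $D$, trapping the nearby recurrence inside $D$. Making this no-escape property precise — reconciling the fixed scale $\delta$ with the shrinking scales $1/n$ used to exhaust the chain classes, and excluding the possibility that $\omega(x_n,f)$ stabilizes to a proper subset of a chain component — is the step I expect to be the main obstacle; granting it, the transfer lemma applied to $D$ completes the proof.
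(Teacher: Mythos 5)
Your first two ingredients are sound: the reduction of the theorem to ``for every chain component $C$ and every $\rho>0$ there is a chain component $D\subseteq B_\rho(C)$ with $f|_D$ having the shadowing property'' is correct, and your transfer lemma is essentially the known fact that, under shadowing, isolated (locally maximal) chain components inherit shadowing, i.e. $\mathcal{C}_{\rm iso}(f)\subset\mathcal{C}_{\rm sp}(f)$. The gap is the statement you then declare it suffices to prove: \emph{under s-limit shadowing, the locally maximal chain components are dense in $\mathcal{C}(f)$}. This is false. Let $X$ be a Cantor space and $f=\mathrm{id}_X$. Then $f$ has the s-limit shadowing property, every chain component is a singleton $\{x\}$, and no chain component is locally maximal or isolated: since $X$ has no isolated points, every neighborhood of $\{x\}$ contains other fixed points, hence other full orbits and other chain components. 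Thus $\mathcal{C}_{\rm iso}(f)=\emptyset$ while $\mathcal{C}(f)$ is a whole Cantor space, so the set you are trying to prove dense can be empty. The theorem survives this example only because $f|_{\{x\}}$ trivially has shadowing; the components witnessing density need not be, and in general cannot be, locally maximal. Consequently the ``no-escape'' property you flag as the main obstacle is not merely hard to make precise --- it is unattainable by any argument, and your construction of $D$ as a limit of $\omega(x_n,f)$ cannot be repaired within this strategy.

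The correct mechanism replaces topological isolation by order-maximality. Fix $\delta>0$ and a clopen $\delta$-chain class $A$ of $CR(f)$, and order the chain components contained in $A$ by $C_1\le C_2$ iff some point of $C_2$ chains to some point of $C_1$; a Zorn's lemma argument using the finite clopen decompositions $\mathcal{C}_\epsilon(f)$ produces a maximal element $C^\ast$. One then shows directly that $f|_{C^\ast}$ has shadowing: given a fine pseudo-orbit $\xi$ in $C^\ast$, prepend a long true backward orbit segment inside $C^\ast$ and append the true forward orbit of the last entry of $\xi$; s-limit shadowing yields (after passing to a limit of suitably shifted shadowing points $f^k(z^{(k)})$) a point $z$ that $\gamma$-shadows $\xi$, admits a full backward orbit in $B_\gamma(C^\ast)$ accumulating on some component $C\subseteq A$ with a point $y\in C$ satisfying $y\rightarrow z$, and --- this is where the \emph{limit} part of s-limit shadowing is actually used --- satisfies $z\rightarrow w$ for some $w\in C^\ast$. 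Then $y\rightarrow w$ forces $C=C^\ast$ by maximality, and $y\rightarrow z\rightarrow w$ with $y,w\in C^\ast$ forces $z\in C^\ast$. No separation of $C^\ast$ from nearby components is ever needed. I would redirect your effort from proving local maximality of $D$ to establishing this order-theoretic maximality and the accompanying two-sided control of the shadowing point.
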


\begin{rem}
\normalfont
\begin{itemize}
\item[(1)] For any continuous map $f\colon X\to X$, if $f$ has the s-limit shadowing property, then $f$ has the shadowing property; therefore, Theorem 1.1 gives a partial answer to the question mentioned above. 
\item[(2)] For any subset $S$ of $X$ and $r>0$, we denote by $B_r(S)$ the $r$-neighborhood of $S$:
\[
B_r(S)=\{x\in X\colon d(x,S)\le r\}.
\]
Given any continuous map $f\colon X\to X$ and $C\in\mathcal{C}(f)$, we easily see that for every $\epsilon>0$, there is $\delta>0$ such that
\[
B_\delta(C)\cap D\ne\emptyset\implies D\subset B_\epsilon(C)
\]
for all $D\in\mathcal{C}(f)$. For $C\in\mathcal{C}(f)$, we say that $C$ is {\em isolated} if there exists $r>0$ such that
\[
B_r(C)\cap D=\emptyset
\]
for all $D\in\mathcal{C}(f)$ with $C\ne D$. Let
\[
\mathcal{C}_{\rm iso}(f)=\{C\in\mathcal{C}(f)\colon\text{$C$ is isolated}\}.
\]
If $f$ has the shadowing property, then as shown in \cite{M},
\[
f|_{CR(f)}\colon CR(f)\to CR(f)
\]
also has the shadowing property. It follows that if $f$ has the shadowing property, then
\[
\mathcal{C}_{\rm iso}(f)\subset\mathcal{C}_{\rm sp}(f).
\]
In particular, if $f$ satisfies the shadowing property and $|\mathcal{C}(f)|<\infty$, then
\[
\mathcal{C}(f)=\mathcal{C}_{\rm iso}(f)=\mathcal{C}_{\rm sp}(f).
\]
Even if $f$ has the shadowing property, if $|\mathcal{C}(f)|=\infty$, then $\mathcal{C}(f)\ne\mathcal{C}_{\rm iso}(f)$, and it can happen that
\[
\mathcal{C}(f)\ne\mathcal{C}_{\rm sp}(f).
\]
\item[(3)] A compact metric space $Z$ is called a {\em Cantor space} if $Z$ is perfect, that is, without isolated points, and totally disconnected. Every Cantor space is homeomorphic to the Cantor ternary set. Let $X$ be a Cantor space and let $f=id_X$, the identity map. Then, $f$ has the s-limit shadowing property. We easily see that $X=CR(f)$ and
\[
\leftrightarrow=\{(x,x)\colon x\in X\}.
\]
It follows that
\[
X=\mathcal{C}(f)=\mathcal{C}_{\rm sp}(f),
\]
and so $\mathcal{C}(f)$ is a Cantor space. Note also that $\mathcal{C}_{\rm iso}(f)=\emptyset$.
\item[(4)] In \cite{K}, by using a result of Good and Meddaugh \cite{GM}, the author showed that for any continuous map $f\colon X\to X$, if $f$ satisfies the shadowing property and $\dim{CR(f)}=0$, that is, $CR(f)$ is totally disconnected, then
\[
\mathcal{C}(f)=\overline{\mathcal{C}_{\rm sp}(f)}.
\]
This is another partial answer to the above question. Also in \cite{K}, an example is given of a continuous map $f\colon X\to X$ such that the following properties fold:
\begin{itemize}
\item $X$ is a Cantor space,
\item $f$ has the shadowing property,
\item $\mathcal{C}(f)$ is a Cantor space,
\item $\mathcal{C}(f)=\overline{\mathcal{C}_{\rm sp}(f)}$,
\item $\mathcal{C}_{\rm sp}(f)$ is a countable set and so is a meager subset of $\mathcal{C}(f)$.
\end{itemize}
\item[(5)] Let $M$ be a closed differentiable manifold and let $\mathcal{C}(M)$ (resp.\:$\mathcal{H}(M)$) denote the set of all continuous self-maps (resp.\:homeomorphisms) of $M$, endowed with the $C^0$-topology. Since generic $f\in\mathcal{C}(M)$ (resp.\:$f\in\mathcal{H}(M)$) satisfies the shadowing property and $\dim{CR(f)}=0$ (see Section 1 of \cite{K} for details), by the result in \cite{K} mentioned in the previous remark, such $f$ satisfies
\[
\mathcal{C}(f)=\overline{\mathcal{C}_{\rm sp}(f)}.
\]
Due to results in \cite{AHK}, we know that for generic $f\in\mathcal{C}(M)$ (resp.\:$f\in\mathcal{H}(M)$), $\mathcal{C}(f)$ is a Cantor space and so
\[
\mathcal{C}_{\rm iso}(f)=\emptyset.
\]
We also remark that for generic $f\in\mathcal{C}(M)$ (resp.\:$f\in\mathcal{H}(M)$),
\[
\mathcal{C}(f)=\overline{\mathcal{C}_{\rm sp}(f)}
\]
is a consequence of $\dim{CR(f)}=0$ and a fact proved in \cite{AHK} that
\[
\bigcup\{C\in\mathcal{C}(f)\colon\text{$C$ is terminal}\}
\]
is dense in $CR(f)$. This is because for any $f\in\mathcal{C}(M)$ and $C\in\mathcal{C}(f)$, since $M$ is locally connected, if $C$ is terminal and $\dim{C}=0$, then $C$ is a periodic orbit, or $(C,f|_C)$ is topologically conjugate to an odometer, so we have $C\in\mathcal{C}_{\rm sp}(f)$ in both cases.
\end{itemize}
\end{rem}

Let $M$ be a closed differentiable manifold and let $\mathcal{C}(M)$ (resp.\:$\mathcal{H}(M)$) be the set of all continuous self-maps (resp.\:homeomorphisms) of $M$, endowed with the $C^0$-topology. The other result of this paper is the following.

\begin{thm}
Generic $f\in\mathcal{H}(M)$ does not have the s-limit shadowing property.
\end{thm}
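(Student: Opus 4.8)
The plan is to exhibit a residual subset $\mathcal{R}\subset\mathcal{H}(M)$ none of whose members has the s-limit shadowing property; as $\mathcal{H}(M)$ is a Baire space, this gives the theorem. I would build $\mathcal{R}$ on top of the residual set provided by \cite{AHK} and \cite{K}: for generic $f\in\mathcal{H}(M)$ one has that $f$ has the shadowing property, $\mathcal{C}(f)$ is a Cantor space (so $\mathcal{C}_{\rm iso}(f)=\emptyset$), $CR(f)$ is totally disconnected, perfect, and nowhere dense in the connected manifold $M$, and $\bigcup\{C\in\mathcal{C}(f)\colon C\text{ is terminal}\}$ is dense in $CR(f)$, each terminal chain component being a periodic orbit or an odometer. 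Three consequences of this will be used: (i) no chain component is a topological attractor, since a chain component that attracts a neighbourhood is isolated in $\mathcal{C}(f)$, contradicting $\mathcal{C}_{\rm iso}(f)=\emptyset$; (ii) every neighbourhood of every chain component meets $M\setminus CR(f)$, so contains transient orbits; and (iii) arbitrarily close to any chain component there are terminal chain components that are periodic orbits or odometers.

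For $w\in X$ write $\omega(w)=\bigcap_{N\ge0}\overline{\{f^i(w)\colon i\ge N\}}$ and, for $C\in\mathcal{C}(f)$, $W^s(C)=\{w\in X\colon\omega(w)\subset C\}$. The key point is a consequence of s-limit shadowing, proved by a construction in the spirit of the proof of Theorem~1.1: if $f$ has the s-limit shadowing property, then for every $C\in\mathcal{C}(f)$,
\[
\overline{W^s(C)}=\{z\in X\colon z\to C\},
\]
and moreover every $z$ with $z\to C$ is a limit of points $w$ with $\omega(w)=C$ and $\lim_{i\to\infty}d(f^i(w),C)=0$. The inclusion $\subset$ holds for any $f$. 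For $\supset$, fix $z$ with $z\to C$ and $\epsilon>0$, choose $\delta$ from the s-limit shadowing property, and build a $\delta$-limit-pseudo orbit $\xi$ made of a $\delta$-chain from $z$ into $C$ followed by the concatenation of $\delta_n$-chains inside $C$ (with $\delta_n\downarrow0$), chosen using the chain transitivity of $f|_C$ (Remark~1.1(3)) to run through a $\tfrac1n$-net of $C$; then $\xi$ lies in $C$ from some index on and its set of accumulation points is $C$, so any point $\epsilon$-limit shadowing $\xi$ is $\epsilon$-close to $z$, has $\omega$-limit set equal to $C$, and converges to $C$. (This rigidity is what s-limit shadowing buys: for $f$ with only the shadowing property the displayed equality holds when $C$ is isolated but can fail when $C$ is not, which is exactly the situation for generic $f$.)

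It remains to derive a contradiction for $f$ as in the first paragraph, and this is where the bulk of the work lies. The plan is to combine the displayed equality with (i)--(iii): picking a terminal chain component $C$ that is a periodic orbit (or an odometer), one should use the abundance of transient orbits near $C$, together with the s-limit shadowing property, to show that $W^s(C)$ contains a whole neighbourhood of $C$ — that is, that $C$ is a topological attractor — contradicting (i). The obstacle is precisely this last implication; I expect it to be the most technical part of the paper. It is genuinely delicate and must use $CR(f)\subsetneq M$: by Remark~1.2(3) the identity on a Cantor set has the s-limit shadowing property even though $\mathcal{C}_{\rm iso}=\emptyset$, so no argument resting on the s-limit shadowing property alone can succeed. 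I anticipate carrying it out by running the construction above starting from a transient point $z$ close to $C$ and using the terminality of $C$ to trap the resulting limit-pseudo orbit in a neighbourhood of $C$, thereby forcing $\omega(z)\subset C$ for all $z$ near $C$; handling the periodic-orbit and odometer cases uniformly is part of the same difficulty.
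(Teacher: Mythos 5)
Your proposal has a genuine gap, and moreover the contradiction you are steering toward cannot be reached. You correctly assemble the generic properties from \cite{AHK} and you correctly sense that s-limit shadowing lets you limit-shadow a pseudo-orbit that runs from a transient point into a chain component (your claim about $\overline{W^s(C)}\supset\{z\colon z\to C\}$ is in the spirit of the paper's Claim 2 and Proposition 3.1 and is essentially fine). But the endgame you announce --- showing that a terminal chain component $C$ satisfies $W^s(C)\supset B_r(C)$ for some $r>0$, i.e.\ is a topological attractor --- is not merely ``the most technical part''; it is false under your own standing assumptions, independently of any shadowing hypothesis. Since $\mathcal{C}(f)$ is a Cantor space, there are chain components $D\ne C$ meeting every neighbourhood of $C$, and for any $w\in D$ one has $\omega(w)\subset D$, hence $w\notin W^s(C)$. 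So $W^s(C)$ never contains a neighbourhood of $C$ here, no implication from s-limit shadowing can produce one, and the ``contradiction with (i)'' you seek is unobtainable by this route. This is why you were unable to close the argument.

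The paper's actual mechanism is different and much more direct: it uses the generic denseness of \emph{initial} chain components, $\mathcal{C}(f)=\overline{\mathcal{C}_{\rm ini}(f)}$ (terminal for $f^{-1}$), rather than terminal ones, and it derives the contradiction from a single point rather than from an open set. Pick $x\in M\setminus CR(f)$ with $d(x,CR(f))>\epsilon$, let $\delta$ be given by s-limit shadowing, let $C\supset\omega(x,f)$, and choose $C'\in\mathcal{C}_{\rm ini}(f)$ with a point $w\in C'$ within $\delta/2$ of some $z\in C$. Concatenating a $\delta/2$-chain from $x$ to $z$ with the true orbit of $w$ gives a $\delta$-limit-pseudo orbit, whose $\epsilon$-limit shadowing point $y$ satisfies both $y\notin CR(f)$ (since $d(x,y)\le\epsilon$) and $\lim_{i\to\infty}d(f^i(y),C')=0$. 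The latter two properties are incompatible with $C'$ being initial: the backward orbit of $f^N(y)$, for $N$ large, is an arbitrarily fine chain of $f^{-1}$ starting arbitrarily close to $C'$ yet escaping to $y$, far from $CR(f)$. If you want to salvage your draft, replace the attractor target by this ``non-recurrent point forward-asymptotic to an initial component'' contradiction; your $W^s$ construction then becomes unnecessary, since a single well-chosen limit-pseudo orbit suffices.
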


\begin{rem}
\normalfont
In \cite{MO}, the s-limit shadowing is proved to be dense in $\mathcal{C}(M)$. In a recent paper \cite{BCOT}, it is proved that the s-limit shadowing is generic in $\mathcal{C}(S^1)$ where $S^1$ is the unit circle.
\end{rem}

Let us recall the definition of {\em limit shadowing}. Given a continuous map $f\colon X\to X$, a sequence $(x_i)_{i\ge0}$ of points in $X$ is called a {\em limit-pseudo orbit} of $f$ if
\[
\lim_{i\to\infty}d(f(x_i),x_{i+1})=0,
\]
and said to be {\em limit shadowed} by $x\in X$
if
\[
\lim_{i\to\infty}d(f^i(x),x_i)=0.
\]
We say that $f$ has the {\em limit shadowing property} if every limit-pseudo orbit of $f$ is limit shadowed by some point of $X$. We know that if $f$ has the s-limit shadowing property, then $f$ satisfies the limit shadowing property (see \cite{BGO}).

\begin{rem}
\normalfont
In \cite{P}, it is proved that for any $f\in\mathcal{H}(S^1)$, if $f$ has the shadowing property, then $f$ satisfies the limit shadowing property. Since the shadowing is generic in $\mathcal{H}(S^1)$, the limit shadowing is also generic in $\mathcal{H}(S^1)$. Although, by Theorem 1.2, the s-limit shadowing is not generic in $\mathcal{H}(S^1)$.
\end{rem}

This paper consists of three sections. In Section 2, we prove Theorem 1.1. Theorem 1.2 is proved in Section 3.

\section{Proof of Theorem 1.1}

In this section, we prove Theorem 1.1. Given any continuous map $f\colon X\to X$ and $\delta>0$, we define an equivalence relation $\leftrightarrow_\delta$ in
\[
CR(f)^2=CR(f)\times CR(f)
\]
by: for any $x,y\in CR(f)$, $x\leftrightarrow_\delta y$ if and only if $x\rightarrow_\delta y$ and $y\rightarrow_\delta x$. Since $x\leftrightarrow_\delta f(x)$ for every $x\in X$, each equivalence class of $\leftrightarrow_\delta$ is an $f$-invariant subset of $CR(f)$. For any $x,y\in CR(f)$ with $d(x,y)\le\delta$, take $z\in CR(f)$ such that $f(z)=x$.
Then, for every $\delta>0$, there is a $\delta$-chain $(x_i)_{i=0}^k$ of $f$ such that $x_0=x$ and $x_k=z$. Since
\[
(x_0,x_1,\dots,x_k,y)
\]
is a $\delta$-chain of $f$, we obtain $x\rightarrow_\delta y$. Similarly, we obtain $y\rightarrow_\delta x$, thus any $x,y\in CR(f)$ with $d(x,y)\le\delta$ satisfies $x\leftrightarrow_\delta y$. It follows that each equivalence class of $\leftrightarrow_\delta$ is an open and closed subset of $CR(f)$. We denote by $\mathcal{C}_\delta(f)$ the (finite) set of equivalence classes of $\leftrightarrow_\delta$. Since
\[
\leftrightarrow=\bigcap_{\delta>0}\leftrightarrow_\delta,
\]
for any $C\in\mathcal{C}(f)$ and $\delta>0$, there is a unique $C_\delta\in\mathcal{C}_\delta(f)$ such that $C\subset C_\delta$. Then, we have $C_{\delta_1}\subset C_{\delta_2}$ for all $0<\delta_1<\delta_2$, and
\[
C=\bigcap_{\delta>0}C_\delta.
\]
By compactness, for every $\epsilon>0$, we have $C_\delta\subset B_\epsilon(C)$ for some $\delta>0$, here $B_\epsilon(\cdot)$ denotes the $\epsilon$-neighborhood.

Fix $\delta>0$ and $A\in\mathcal{C}_\delta(f)$. Let
\[
\mathcal{S}=\{C\in\mathcal{C}(f)\colon C\subset A\},
\]
and
\[
\mathcal{S}_\epsilon=\{B\in\mathcal{C}_\epsilon(f)\colon B\subset A\}
\]
for every $0<\epsilon\le\delta$. Then, we have
\[
A=\bigsqcup_{C\in\mathcal{S}}C=\bigsqcup_{B\in\mathcal{S}_\epsilon}B
\]
for all $0<\epsilon\le\delta$. We define an order $\le$ on $\mathcal{S}$ by: for any $C_1, C_2\in\mathcal{S}$, $C_1\le C_2$ if and only if for every $\gamma>0$, there is a $\gamma$-chain $(x_i)_{i=0}^k$ of $f$ with $x_0\in C_2$ and $x_k\in C_1$, or equivalently, there are $p\in C_1$ and $q\in C_2$ such that $q\rightarrow p$. For each  $0<\epsilon\le\delta$, we also define an order $\le_\epsilon$ on $\mathcal{S}_\epsilon$ by: for any $B_1, B_2\in\mathcal{S}_\epsilon$, $B_1\le_\epsilon B_2$ if and only if there are $p\in B_1$ and $q\in B_2$ such that $q\rightarrow_\epsilon p$. By Zorn's lemma, we shall prove the following claim.

\begin{claim1}
$(\mathcal{S},\le)$ has a maximal element.
\end{claim1}

\begin{proof}[Proof of Claim 1]
Given any chain $\mathcal{R}$ in $(\mathcal{S},\le)$, let
\[
\mathcal{R}_\epsilon=\{B\in\mathcal{S}_\epsilon\colon\text{there exists $R\in\mathcal{R}$ such that $R\subset B$}\}
\]
for all $0<\epsilon\le\delta$. Then, for each $0<\epsilon\le\delta$, $\mathcal{R}_\epsilon$ is a chain in $(\mathcal{S_\epsilon},\le_\epsilon)$. For all $0<\epsilon\le\delta$, note that $\mathcal{R}_\epsilon$ is a finite set, and let
\[
B_\epsilon=\max{(\mathcal{R}_\epsilon,\le_\epsilon)}.
\]
By compactness, we can take a sequence $\delta\ge\epsilon_1>\epsilon_2>\cdots$ and a closed subset $B$ of $X$ such that
\[
\tag{1} \lim_{n\to\infty}\epsilon_n=0
\]
and
\[
\tag{2} \lim_{n\to\infty}d_H(B_{\epsilon_n}, B)=0,
\]
where $d_H$ is the Hausdorff distance. For all $n\ge1$, since $B_{\epsilon_n}\in\mathcal{R}_{\epsilon_n}\subset\mathcal{S}_{\epsilon_n}$, we have $B_{\epsilon_n}\subset A$. By (2), we obtain $B\subset A$. Since $B_{\epsilon_n}\in\mathcal{R}_{\epsilon_n}\subset\mathcal{S}_{\epsilon_n}\subset\mathcal{C}_{\epsilon_n}(f)$ for any $n\ge1$, by (1) and (2), we obtain $x\leftrightarrow y$ for all $x,y\in B$, thus there is $C\in\mathcal{C}(f)$ with $B\subset C$. It follows that $A\cap C\ne\emptyset$, and this implies $C\subset A$, that is, $C\in\mathcal{S}$. Given any $R\in\mathcal{R}$, take $B^{(n)}\in\mathcal{C}_{\epsilon_n}(f)$, $n\ge1$, such that $R\subset B^{(n)}$ for all $n\ge1$. Then, by (1), we have
\[
\tag{3} R=\bigcap_{n\ge1}B^{(n)}.
\]
Also, we easily see that $B^{(n)}\in\mathcal{R}_{\epsilon_n}$ for all $n\ge1$. For each $n\ge1$, since
\[
B_{\epsilon_n}=\max{(\mathcal{R}_{\epsilon_n},\le_{\epsilon_n})},
\]
there is an $\epsilon_n$-chain $(x_i)_{i=0}^k$ of $f$ with $x_0\in B_{\epsilon_n}$ and $x_k\in B^{(n)}$. 
From $B\subset C$, (1), (2), and (3), it follows that for every $\gamma>0$, there is a $\gamma$-chain $(x_i)_{i=0}^k$ of $f$ with $x_0\in C$ and $x_k\in R$, that is, $R\le C$. Since $R$ is arbitrary, $C$ gives an upper bound of $\mathcal{R}$ in $(\mathcal{S},\le)$. Since $\mathcal{R}$ is arbitrary, by Zorn's lemma, we conclude that $(\mathcal{S},\le)$ has a maximal element, proving the claim.
\end{proof}

Let $C^\ast$ be a maximal element of $(\mathcal{S},\le)$. We shall prove the following claim.

\begin{claim2}
If $f$ has the s-limit shadowing property, then $C^\ast\in\mathcal{C}_{\rm sp}(f)$. 
\end{claim2} 

\begin{proof}[Proof of Claim 2]
As shown above, we have
\[
\tag{4} \delta<\min_{A'\in\mathcal{C}_\delta(f)\setminus\{A\}}\min_{x\in A,y\in A'}d(x,y).
\]
Given any $0<\gamma\le\delta$, since $f$ has the s-limit shadowing property, there is $\beta>0$ such that every $\beta$-limit-pseudo orbit of $f$ is $\gamma$-limit shadowed by some point of $X$. 
Let $\xi=(x_i)_{i\ge0}$ be a $\beta$-pseudo orbit of $f|_{C^\ast}$. For each $k\ge0$, we take $y_k\in C^\ast$ with $f^k(y_k)=x_0$ and consider a $\beta$-limit-pseudo orbit
\[
\xi_k=(y_k,f(y_k),\dots,f^{k-1}(y_k),x_0,x_1,\dots,x_k,f(x_k),f^2(x_k),\dots)
\]
of $f$, which is $\gamma$-limit shadowed by some $z^{(k)}\in X$. Note that for every $k\ge0$,
\[
d(f^i(z^{(k)}),C^\ast)\le\gamma
\]
for all $i\ge0$, and
\[
\tag{5} \lim_{i\to\infty}d(f^i(f^k(z^{(k)})),C^\ast)=0.
\]
By compactness, we have
\[
\lim_{l\to\infty}f^{k_l}(z^{(k_l)})=z
\]
for some sequence $0\le k_1<k_2<\cdots$ and some $z\in X$. By the choice of $\xi_k$, $k\ge0$, we easily see that $\xi$ is $\gamma$-shadowed by $z$. In order to prove $z\in C^\ast$, by taking a subsequence if necessary, we may assume that
\[
\lim_{l\to\infty}f^{j+k_l}(z^{(k_l)})=z_j
\]
for all $j\le0$ for some $z_j\in X$. Then, the sequence $(z_j)_{j\le0}$ satisfies the following properties:
\begin{itemize}
\item $z_0=z$,
\item $f(z_{j-1})=z_j$ for every $j\le0$,
\item $d(z_j,C^\ast)\le\gamma$ for all $j\le0$.
\end{itemize}
By these properties, we obtain $C\in\mathcal{C}(f)$ and $y\in C$ such that
\[
\lim_{j\to-\infty}d(z_j,C)=0,
\]
$d(y,C^\ast)\le\gamma$, and $y\rightarrow z$. By (4), $0<\gamma\le\delta$, and $d(y,C^\ast)\le\gamma$, we obtain $C\subset A$, that is, $C\in\mathcal{S}$. On the other hand, by (5), we have $z\rightarrow w$ for some $w\in C^\ast$. Since $y\rightarrow z$ and $z\rightarrow w$, it holds that $y\rightarrow w$, so by the maximality of $C^\ast$ in $(\mathcal{S},\le)$, $C=C^\ast$. Again by $y\rightarrow z$ and $z\rightarrow w$, we obtain  $z\in C^\ast$. Recall that $\xi$ is $\gamma$-shadowed by $z$. Since $0<\gamma\le\delta$ and $\xi$ are arbitrary, we conclude that $f|_{C^\ast}$ satisfies the shadowing property, that is, $C^\ast\in\mathcal{C}_{\rm sp}(f)$, completing the proof of the claim.
\end{proof}

Suppose that $f$ has the s-limit shadowing property. As shown above, for any $\delta>0$ and $A\in\mathcal{C}_\delta(f)$, there is $C^\ast\in\mathcal{C}(f)$ such that $C^\ast\subset A$ and $C^\ast\in\mathcal{C}_{\rm sp}(f)$. This implies that for any $C\in\mathcal{C}(f)$ and any neighborhood $U$ of $C$, there is $C^\ast\in\mathcal{C}(f)$ such that $C^\ast\subset U$ and $C^\ast\in\mathcal{C}_{\rm sp}(f)$. By the definition of the quotient topology, we conclude that
\[
\mathcal{C}(f)=\overline{\mathcal{C}_{\rm sp}(f)},
\]
thus Theorem 1.1 has been proved.

\section{Proof of Theorem 1.2}

In this section, we prove Theorem 1.2. Given any continuous map $f\colon X\to X$, $C\in\mathcal{C}(f)$ is said to be {\em terminal} if for every $\epsilon>0$, there is $\delta>0$ such that every $\delta$-chain $(x_i)_{i=0}^k$ of $f$ with $x_0\in C$ satisfies $d(x_i,C)\le\epsilon$ for all $0\le i\le k$. Note that if $f$ is a homeomorphism, then
\[
\mathcal{C}(f)=\mathcal{C}(f^{-1}).
\]
We say that $C\in\mathcal{C}(f)$ is {\em initial} if $C$ is terminal for $f^{-1}$. We denote by $\mathcal{C}_{\rm ini}(f)$ the set of initial chain components for $f$.

Let $M$ be a closed differentiable manifold. Due to results in \cite{AHK}, generic $f\in\mathcal{H}(M)$ satisfies the following properties:
\begin{itemize}
\item $CR(f)$ is a  Cantor space and so $M\setminus CR(f)\ne\emptyset$,
\item $\mathcal{C}(f)=\overline{\mathcal{C}_{\rm ini}(f)}$.
\end{itemize}
For such $f\in\mathcal{H}(M)$, to obtain a contradiction, assume that $f$ has the s-limit shadowing property. Take $x\in M\setminus CR(f)$ and $\epsilon>0$ such that $d(x,CR(f))>\epsilon$. By the assumption, we obtain $\delta>0$ such that every $\delta$-limit-pseudo orbit of $f$ is $\epsilon$-limit shadowed by some point of $M$. Let $\omega(x,f)$ denote the $\omega$-limit set of $x$ for $f$. Since
\[
f|_{\omega(x,f)}\colon\omega(x,f)\to\omega(x,f)
\]
is chain transitive, $\omega(x,f)\subset C$ for some $C\in\mathcal{C}(f)$. Then, since $\mathcal{C}(f)=\overline{\mathcal{C}_{\rm ini}(f)}$, there are $C'\in\mathcal{C}_{\rm ini}(f)$, $z\in C$, and $w\in C'$ such that $d(z,w)\le\delta/2$. Since $\omega(x,f)\subset C$ and $z\in C$, we can take a $\delta/2$-chain $(x_i)_{i=0}^k$ of $f$ with $x_0=x$ and $x_k=z$. Consider a $\delta$-limit-pseudo orbit
\[
\xi=(x_0,x_1,\dots,x_{k-1},w,f(w),f^2(w),\dots)
\]
of $f$, which is $\epsilon$-limit shadowed by some $y\in X$. Since $d(x,y)\le\epsilon$, by the the choice of $\epsilon>0$, we obtain $y\notin CR(f)$. On the other hand, by the choice of $\xi$, we have
\[
\lim_{i\to\infty}d(f^i(y),C')=0.
\]
These properties contradicts $C'\in\mathcal{C}_{\rm ini}(f)$, thus we conclude that generic $f\in\mathcal{H}(M)$ does not have the s-limit shadowing property, and Theorem 1.2 has been proved.

By a similar argument as in the proof of Theorem 1.1 in Section 2, we can prove the following proposition. For completeness, we present a proof of it.

\begin{prop}
Given any homeomorphism $f\colon X\to X$, if $f$ has the s-limit shadowing property, then
\[
\mathcal{C}_{\rm ini}(f)\subset\mathcal{C}_{\rm sp}(f).
\]
\end{prop}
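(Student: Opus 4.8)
The plan is to run the argument of Claim~2 in Section~2, with the maximality of $C^\ast$ there replaced by the defining property of an initial component. Fix $C\in\mathcal{C}_{\rm ini}(f)$, so that $C$ is terminal for $f^{-1}$, and recall that $C$ is $f$-invariant. The one preliminary fact I would isolate is a \emph{chain-reversal observation}: if a point $z\in X$ has the property that for every $\gamma>0$ there is a $\gamma$-chain of $f$ from $z$ to some point of $C$, then $z\in C$. To see this, reverse such a chain; by the uniform continuity of $f^{-1}$ the reversed sequence is a $\gamma'$-chain of $f^{-1}$ issuing from a point of $C$, with $\gamma'\to0$ as $\gamma\to0$, so for any prescribed $\epsilon>0$ the terminality of $C$ for $f^{-1}$ forces $z\in B_\epsilon(C)$; letting $\epsilon\to0$ and using that $C$ is closed gives $z\in C$.

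Now fix $\gamma>0$ and, by the s-limit shadowing property, choose $\beta>0$ so that every $\beta$-limit-pseudo orbit of $f$ is $\gamma$-limit shadowed by some point of $X$. Let $\xi=(x_i)_{i\ge0}$ be an arbitrary $\beta$-pseudo orbit of $f|_C$. For each $k\ge0$ I would consider the sequence $\xi_k=(x_0,x_1,\dots,x_k,f(x_k),f^2(x_k),\dots)$, which follows $\xi$ up to time $k$ and then runs along the genuine $f$-orbit of $x_k$. Since $C$ is $f$-invariant, every term of $\xi_k$ lies in $C$, and the only nonzero consecutive errors of $\xi_k$ occur among the first $k$ steps and are each $\le\beta$; hence $\xi_k$ is a $\beta$-limit-pseudo orbit of $f$ and is $\gamma$-limit shadowed by some $z^{(k)}\in X$. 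Because all terms of $\xi_k$ lie in $C$ and $d(f^i(z^{(k)}),\xi_k(i))\to0$ as $i\to\infty$, the $\omega$-limit set $\omega(z^{(k)},f)$ is contained in $C$; and because $\xi_k(i)=x_i$ for $0\le i\le k$, we have $d(f^i(z^{(k)}),x_i)\le\gamma$ there. Passing to a subsequence with $z^{(k_l)}\to z$, the last estimate shows $d(f^i(z),x_i)\le\gamma$ for all $i\ge0$, i.e. $\xi$ is $\gamma$-shadowed by $z$.

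It remains to show $z\in C$, and this is the step I expect to be the main obstacle: one must convert the ``forward orbit eventually near $C$'' behaviour of the approximants $z^{(k_l)}$ into a chain-level statement about their limit $z$. Concretely, given $\delta>0$, I would pick $l$ so large that $d(f(z),f(z^{(k_l)}))\le\delta$, and then, since $\omega(z^{(k_l)},f)\subseteq C$ gives $d(f^N(z^{(k_l)}),C)\le\delta$ for some $N\ge1$, the sequence $(z,f(z^{(k_l)}),f^2(z^{(k_l)}),\dots,f^N(z^{(k_l)}),w)$ is a $\delta$-chain of $f$ from $z$ to a suitable $w\in C$. Thus for every $\delta>0$ there is a $\delta$-chain of $f$ from $z$ to $C$, so the chain-reversal observation of the first paragraph yields $z\in C$. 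Therefore every $\beta$-pseudo orbit of $f|_C$ is $\gamma$-shadowed by a point of $C$; as $\gamma>0$ was arbitrary, $f|_C$ has the shadowing property, i.e. $C\in\mathcal{C}_{\rm sp}(f)$. Since $C\in\mathcal{C}_{\rm ini}(f)$ was arbitrary, this proves $\mathcal{C}_{\rm ini}(f)\subseteq\mathcal{C}_{\rm sp}(f)$.
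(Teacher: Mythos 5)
Your proof is correct and follows essentially the same route as the paper's: the same truncated-and-continued pseudo-orbits $\xi_k$, the same compactness extraction of a shadowing point, and the same use of the terminality of $C$ for $f^{-1}$ (your chain-reversal observation, which the paper leaves implicit). The only cosmetic difference is that the paper applies that observation to each approximant $x^{(k)}$, concluding $x^{(k)}\in C$ and then passing to the limit using that $C$ is closed, whereas you apply it once to the limit point $z$.
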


\begin{proof}[Proof of Proposition 3.1]
Let $C\in\mathcal{C}_{\rm ini}(f)$. Given any $\epsilon>0$, since $f$ has the s-limit shadowing property, there exists $\delta>0$ such that every $\delta$-limit-pseudo orbit of $f$ is $\epsilon$-limit shadowed by some point of $X$. Let $\xi=(x_i)_{i\ge0}$ be a $\delta$-pseudo orbit of $f|_C$. For each $k\ge0$, we consider a $\delta$-limit-pseudo orbit
\[
\xi_k=(x_0,x_1,\dots,x_k,f(x_k),f^2(x_k),\dots)
\]
of $f$, which is $\epsilon$-limit shadowed by some $x^{(k)}\in X$. By the choice of $\xi_k$, $k\ge0$, we obtain
\[
\tag{1} \lim_{i\to\infty}d(f^i(x^{(k)}),C)=0
\]
for all $k\ge0$. By compactness, we have
\[
\lim_{l\to\infty}x^{(k_l)}=x
\]
for some sequence $0\le k_1<k_2<\cdots$ and some $x\in X$. Again by the choice of $\xi_k$, $k\ge0$, we easily see that $\xi$ is $\epsilon$-shadowed by $x$. On the other hand, by $C\in\mathcal{C}_{\rm ini}(f)$ and (1), we have $x^{(k)}\in C$ for all $k\ge0$, which implies $x\in C$. Since $\epsilon>0$ and $\xi$ are arbitrary, we conclude that $f|_C$ have the shadowing property, that is, $C\in\mathcal{C}_{\rm sp}(f)$. Since $C\in\mathcal{C}_{\rm ini}(f)$ is arbitrary, we obtain
\[
\mathcal{C}_{\rm ini}(f)\subset\mathcal{C}_{\rm sp}(f),
\]
completing the proof
\end{proof}


\begin{thebibliography}{99}

\bibitem{AHK} E. Akin, M. Hurley, J. Kennedy, Dynamics of topologically generic homeomorphisms. Mem. Amer. Math. Soc. 164 (2003).

\bibitem{A} D.V.\:Anosov, Geodesic flows on closed Riemann manifolds with negative curvature. Proc. Steklov Inst. Math. 90 (1967), 235 p.

\bibitem{AH} N.\:Aoki, K.\:Hiraide, Topological theory of dynamical systems. Recent advances. North--Holland Mathematical Library, 52. North--Holland Publishing Co., 1994.

\bibitem{BGO} A.D.\:Barwell, C.\:Good, P.\:Oprocha, Shadowing and expansivity in subspaces. Fund. Math. 219 (2012), 223--243.

\bibitem{BCOT} J.\:Bobok, J.\:\v{C}in\v{c}, P.\:Oprocha, S.\:Troubetzkoy, S-limit shadowing is generic for continuous
Lebesgue measure-preserving circle maps, Ergodic Theory Dynam. Systems 43 (2023), 78--98.

\bibitem{B1} R.\:Bowen, $\omega$-limit sets for axiom A diffeomorphisms. J. Differ. Equations 18 (1975), 333--339.

\bibitem{B2} R.\:Bowen, Equilibrium states and the ergodic theory of Anosov diffeomorphisms. Lecture Notes in Mathematics, 470. Springer--Verlag, 1975.

\bibitem{C} C.\:Conley, Isolated invariant sets and the Morse index. CBMS Regional Conference Series
in Mathematics, 38. American Mathematical Society, Providence, R.I., 1978.

\bibitem{GM} C.\:Good, J.\:Meddaugh, Shifts of finite type as fundamental objects in the theory of shadowing. Invent. Math. 220 (2020), 715--736.

\bibitem{K} N.\:Kawaguchi, On $C^0$-genericity of distributional chaos, Ergodic Theory Dynam. Systems 43 (2023), 615--645.

\bibitem{MO} M. Mazur, P. Oprocha, S-limit shadowing is $C^0$-dense. J. Math. Anal. Appl. 408 (2013), 465--475.

\bibitem{M} T.K.S.\:Moothathu, Implications of pseudo-orbit tracing property for continuous maps on compacta. Topology Appl. 158 (2011), 2232--2239.

\bibitem{P} S.Yu.\:Pilyugin, Shadowing in dynamical systems. Lecture Notes in Mathematics, 1706. Springer--Verlag, 1999.

\bibitem{S} K.\:Sakai, Various shadowing properties for positively expansive maps. Topology Appl. 131 (2003), 15--31.

\end{thebibliography}
\end{document}